\documentclass[oneside,10pt]{article}          
\usepackage[b5paper]{geometry}	    

\usepackage{graphicx}
\usepackage{epstopdf}
\usepackage{amsfonts,amsmath,latexsym,amssymb} 
\usepackage{amsthm}                
\usepackage{mathrsfs,upref}         
\usepackage{mathptmx}		    
\usepackage{mia}	            

\usepackage[final]
{showkeys}

\usepackage{enumerate}

\usepackage{hyperref}

\newtheorem{theorem}{Theorem}[section]           
\newtheorem{lemma}[theorem]{Lemma}               
\newtheorem{corollary}[theorem]{Corollary}

\newtheorem{conjecture}[theorem]{Conjecture}

\theoremstyle{definition}

\newtheorem{remark}{Remark}

\numberwithin{equation}{section}       

\newcommand{\de}{\delta}

\renewcommand{\Psi}{\overline{\Phi}}

\newcommand{\R}{\mathbb{R}}

\renewcommand{\L}{\mathcal{L}}

\renewcommand{\le}{\leqslant}
\renewcommand{\ge}{\geqslant}

\begin{document}

\title{One-sided concentration \\ near the mean of log-concave distributions}


\author{Iosif Pinelis}

\address{Department of Mathematical Sciences\\
Michigan Technological University\\
Houghton, Michigan 49931, USA\\
\email{ipinelis@mtu.edu}}

%

\CorrespondingAuthor{Iosif Pinelis}


\date{\today}                               

\keywords{log-concave distributions; probability inequalities; moments}

\subjclass{60E15, 26D10, 26D15, 26A51}


\begin{abstract}
A lower bound on the probability $P(0<X<\de)$ for all real $\de>0$ and all random variables $X$ with log-concave p.d.f.'s such that $EX=0$ and $EX^2=1$ is obtained.
\end{abstract}

\maketitle



\section{Summary and discussion}

Let $L$ denote the set of all real random variables (r.v.'s) $X$ with a log-concave p.d.f.\ $f_X$ such that $EX=0$ and $EX^2=1$. A broad survey on log-concavity was given by Saumard and Wellner \cite{wellner-logconcav}. 

\begin{theorem}\label{th:}
For all $X\in L$ and all real $\de>0$, 
\begin{equation}\label{eq:}
	P(0<X<\de)\ge p(\de):=\frac\de{72(1+c\de)},
\end{equation}
where $c:=\frac{419}{100}$. 
\end{theorem}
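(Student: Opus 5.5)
We reduce the bound to two facts about log-concave $X\in L$: a lower bound on $f_X$ near $0$, and a lower bound on $P(X>0)$. The target has the shape $\de/(72(1+c\de))$, which is about $\de/72$ for small $\de$ and tends to $1/(72c)\approx 1/302$ as $\de\to\infty$. It is natural to get it from an inequality of the form
$$P(0<X<\de)\ge \min\Bigl(a\de,\;b\Bigr)\qquad\text{or}\qquad P(0<X<\de)\ge\frac{a\de}{1+a\de/b},$$
with explicit constants $a,b$ satisfying $a\ge 1/72$ and $a/b\le c$. The second form is what concavity produces.

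\textbf{Step 1: concavity of the distribution function on $[0,\infty)$.} The function $G(\de):=P(0<X<\de)$ is nondecreasing. We would show $G$ is concave on $[0,\infty)$ whenever $f_X$ is nonincreasing on $[0,\infty)$. In general $f_X$ is unimodal, so we split on the mode $m$. If $m\le 0$, then $f_X$ is nonincreasing on $(0,\infty)$, $G$ is concave, and
$$G(\de)\ge \min(\de/\de_0,1)\,G(\de_0)$$
for any reference point $\de_0$. If $m>0$, then $f_X\ge f_X(0)$ on $[0,m]$, which helps for small $\de$; beyond $m$ we again use concavity.

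\textbf{Step 2: two key quantitative inputs.} (i) A lower bound $f_X(0)\ge \kappa$ for log-concave densities with mean $0$ and variance $1$. The classical fact is that the density at the mean is at least $1/(e\sqrt{12})$ or so, up to constants: log-concave densities satisfy $f(EX)\ge e^{-1}\sup f$, and $\sup f\ge 1/\sqrt{12}$ by the variance constraint. (ii) A Grünbaum-type bound $P(X>0)\ge 1/e$, so that the mass to the right of $0$ is nonnegligible.

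These yield $G(\de)\gtrsim \kappa\de$ for small $\de$. For large $\de$, Chebyshev gives $G(\de)\ge 1/e-1/\de^2$, or better a log-concave tail bound $P(X\ge t)\le e^{-t+1}$-type estimate. To glue the two regimes into the single expression $\de/(72(1+c\de))$, we would use concavity of $G$ in the case $m\le0$. In the case $m>0$ we would use the lower bound $f_X\ge\min(f_X(0),f_X(\de))$ on $[0,\de]$ together with log-concave decay estimates.

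\textbf{Main obstacle.} We expect the hardest part to be the interpolation region, where $\de$ is of order $1$ and the mode can be positive or negative. There, $f_X(0)$ bounds alone are insufficient, and the constants $72$ and $c=4.19$ suggest a careful optimization. Our first attempt would be an extremal reduction: for fixed $\de$, minimize $G(\de)$ over $L$, and argue via standard localization, or by replacing $\log f$ with piecewise-linear majorants, that the extremal densities are two-piece exponential or truncated exponential. Then $G(\de)$ can be computed explicitly as a function of a couple of parameters. The resulting one- or two-variable inequality would be checked against $p(\de)$, with monotonicity arguments in $\de$ and, if needed, a numerical verification on a grid backed by Lipschitz bounds.
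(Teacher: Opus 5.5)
Your case split is the same as the paper's: density nonincreasing on $[0,\infty)$, versus a positive mode $u$ with $P(0<X<u)\ge u f(0)$ on the rising part and a separate estimate beyond $u$. The key estimate for a monotone piece is different, though. The paper gets mass near $0$ from moments: $E|X|\ge\frac12$ via the median and Keilson's inequality, hence $EX_+\ge\frac14$ and $EX_+^2\le1$. It then writes $f$ as a mixture of uniforms through $\nu([x,\infty))=f(x)$, applies $\min(\de,y)\ge a(\de,b)\bigl(\frac{y^2}2-b\frac{y^3}3\bigr)$, and finishes with computer-checked polynomial inequalities. You use concavity of $G$ together with Gr\"unbaum's $P(X>0)\ge\frac1e$ and Chebyshev. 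This works and is much simpler, and your \emph{main obstacle} is not a real obstacle. Since $p(\de)\le\min\bigl(\frac{\de}{72},\frac1{72c}\bigr)$ and $\frac1{72c}<0.0034$, it suffices to show $G(\de)\ge\min(A\de,B)$ with $A\ge\frac1{72}$ and $B\ge\frac1{72c}$. No localization, extremal reduction, or numerics is needed. For mode $m\le0$, $G(2)\ge\frac1e-\frac14>0.117$ and concavity already give $G(\de)\ge\min(0.058\,\de,\,0.117)$.

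The genuine gap is the case $m>0$, which you leave open. The bound $f\ge\min(f(0),f(\de))$ on $[0,\de]$ is useless for large $\de$, where $f(\de)$ can be tiny or $0$. Your \emph{concavity beyond $m$} needs a lower bound on $P(m<X<m+t_0)$, which you do not supply. No such bound can hold unconditionally, because $P(X>m)$ can vanish (e.g.\ $X=1-Y$, with $Y$ standard exponential, has mode $1$). The missing step is a dichotomy, analogous to the paper's use of $E(X-u)_+\ge\frac14-u$ together with the separate case $u\ge\frac14$ (Lemma~\ref{lem:4}).

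Put $\kappa:=f(0)$; monotonicity on $[0,m]$ gives $G(\de)\ge\kappa\min(\de,m)$.
\begin{itemize}
\item If $G(m)\ge\frac1{72c}$, then $G(\de)\ge\kappa\de$ for $\de\le m$ and $G(\de)\ge G(m)$ for $\de\ge m$, and you are done.
\item Otherwise $P(X>m)=P(X>0)-G(m)>\frac1e-\frac1{72c}$, hence $P(m<X<m+2)>\frac1e-\frac1{72c}-\frac14>0.114$ (Chebyshev, using $m>0$). Concavity of $t\mapsto P(m<X<m+t)$ then gives $G(\de)\ge\kappa m+0.057(\de-m)\ge\min(\kappa,0.057)\,\de$ for $m<\de\le m+2$, and $G(\de)>0.114$ for $\de\ge m+2$.
\end{itemize}

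For $\kappa\ge\frac1{72}$ you may cite the centroid bound $\kappa\ge\frac1{e\sqrt{12}}$. Alternatively, avoid it by reflecting your $m\le0$ argument, as the paper does in Corollary~\ref{cor:incr}: $f$ is nondecreasing on $(-\infty,0]$, so $\kappa\ge\frac12\bigl(\frac1e-\frac14\bigr)>0.058$.

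With this added you have a complete and much shorter proof, with slack in the constants. The price, compared with the paper, is reliance on Gr\"unbaum's inequality instead of the elementary moment argument.
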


Letting $\de\downarrow0$ in Theorem~\ref{th:}, we immediately obtain

\begin{corollary}\label{cor:}
If $X\in L$, then $f_X(0)\ge\frac1{72}$.  
\end{corollary}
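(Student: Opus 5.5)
The plan is to derive the corollary directly from Theorem~\ref{th:} by dividing both sides of \eqref{eq:} by $\de$ and letting $\de\downarrow0$. The only thing that needs real justification is that $\frac1\de P(0<X<\de)\to f_X(0)$. For this we need $f_X$ to be continuous at $0$. We take $f_X$ to be the canonical version of a log-concave density, namely $f_X=e^{-V}$ with $V\colon\R\to(-\infty,\infty]$ convex, so that the value $f_X(0)$ is well defined.

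\emph{Step 1: $0$ lies in the interior of the support.} The set $I:=\{x\colon f_X(x)>0\}=\{V<\infty\}$ is convex, hence an interval. Since $EX^2=1$, $X$ is not a.s.\ constant. Together with $EX=0$ this gives $P(X>0)>0$ and $P(X<0)>0$, so $I$ contains points on both sides of $0$. Because $I$ is an interval, $0\in\operatorname{int}I$.

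\emph{Step 2: continuity at $0$.} A convex function that is finite on an open interval is continuous there. Hence $V$, and therefore $f_X=e^{-V}$, is continuous on $\operatorname{int}I$, and in particular at $0$.

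\emph{Step 3: passage to the limit.} For small $\de>0$ we have $[0,\de]\subset\operatorname{int}I$, so
\begin{equation*}
	\frac1\de P(0<X<\de)=\frac1\de\int_0^\de f_X(x)\,dx\in\Bigl[\min_{[0,\de]}f_X,\ \max_{[0,\de]}f_X\Bigr]\longrightarrow f_X(0)\quad(\de\downarrow0).
\end{equation*}
By Theorem~\ref{th:}, the left-hand side is at least $\frac{p(\de)}\de=\frac1{72(1+c\de)}$, and this tends to $\frac1{72}$ as $\de\downarrow0$. Comparing the two limits yields $f_X(0)\ge\frac1{72}$.

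There is no serious obstacle here. The one point requiring care is Step~1, because continuity of a log-concave density can fail at an endpoint of its support. The moment conditions $EX=0$ and $EX^2=1$ are exactly what exclude $0$ from being such an endpoint.
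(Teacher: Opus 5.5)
Your proposal is correct and is exactly the paper's argument: divide the bound \eqref{eq:} by $\de$ and let $\de\downarrow0$. Your Steps~1--2 (the moment conditions force $0$ into the interior of the support, where a log-concave density is continuous) spell out the justification of $\frac1\de P(0<X<\de)\to f_X(0)$, which the paper leaves implicit.
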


\bigskip

Somewhat related to these results are the following ones by    
Barlow, Karlin, and Proschan: if $X$ is a \emph{nonnegative} r.v.\ with a log-concave p.d.f.\ $f_X$ and $EX=1$, then 
\begin{enumerate}[(i)]
	\item $f_X(0)\le1$ (\cite[formula~(11)]{BKP}); 
	\item $f_X(x)\le e^{-x}$ for some (unspecified) real $x_0$ and all real $x>x_0$ (\cite[Theorem~5]{BKP}).
\end{enumerate}
The latter result implies the inequality $P(0<X<\de)\ge1-e^{-\de}$ for real $\de\ge x_0$. 

\bigskip

The lower bound $p(\de)$ on $P(0<X<\de)$ in \eqref{eq:} as well as the lower bound $\frac1{72}$ on $f_X(0)$ in Corollary~\ref{cor:} are likely rather far from optimality. 
In fact, as noted by the anonymous reviewer, \cite[Proposition B.2]{bobkov-ledoux19} provides a better lower on $f_X(0)$ than the one in Corollary~\ref{cor:}: $f_X(0)\ge\frac1{e\sqrt3}$. 
Further studies in this direction should be welcome. 

\begin{conjecture}\label{conj:}
Let $Y$ be a r.v.\ with the standard exponential distribution, so that $Y-1\in L$.  
Then for each real $\de>0$ 
\begin{equation*}
	\min_{X\in L}P(0<X<\de)=P(0<Y-1<\de)=e^{-1}(1-e^{-\de}), 	
\end{equation*}
and hence 
\begin{equation*}
	\min_{X\in L}f_X(0)=e^{-1}. 	
\end{equation*}
\end{conjecture} 

The target quantities $P(0<X<\de)$ and $f_X(0)$, as well as the  
quantities $EX$ and $EX^2$ in the condition $X\in L$, 
are linear in the p.d.f.\ $f:=f_X$. However, the condition that $f$ be log concave is transcribed as the system of uncountably many inequalities \break 
$f((1-t)x+ty)\ge f(x)^{1-t}f(y)^t$ for all real $x,y$ and all $t\in(0,1)$, and these inequalities are 
nonlinear in $f$. So, Conjecture~\ref{conj:} represents a problem of nonlinear infinite-dimensional optimization, which may be very nontrivial -- cf.\ e.g.\ \cite{Fattorini_1999}.

\section{Proof of Theorem~\ref{th:}}\label{proofs}

\begin{lemma}\label{lem:E|X|>1/2}
For all $X\in L$ we have $E|X|\ge1/2$. 
\end{lemma}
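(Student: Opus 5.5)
The plan is to split $X$ into its positive and negative parts and use a reverse-Hölder inequality for the one-sided pieces. Put $p:=P(X>0)$ and $m:=EX^+=EX^-$; the equality holds because $EX=0$. Then $E|X|=2m$, and it suffices to show $m\ge 1/4$.

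\textbf{Step 1 (one-sided second moment).} Conditionally on $X>0$, the r.v.\ $Y:=X$ has density $f_X(y)\mathbf 1\{y>0\}/p$. This density is log-concave on $(0,\infty)$, so $Y$ has an increasing failure rate. For a nonnegative r.v.\ with log-concave density one has $EY^2\le 2(EY)^2$ (Barlow--Marshall--Proschan; the exponential law is extremal). I would include a short proof. Its starting point is that $\bar F_Y$ is log-concave, so $\bar F_Y(t)\le \exp(-t\,h(0)) $ and, more usefully, $\int_t^\infty \bar F_Y\le \bar F_Y(t)\,EY$ for all $t\ge0$. Then
\[
EY^2=2\int_0^\infty\!\int_t^\infty \bar F_Y\,ds\,dt\le 2\,EY\int_0^\infty\bar F_Y=2(EY)^2 .
\]
Applying this to $Y$ gives $E(X^+)^2=p\,EY^2\le 2p(m/p)^2=2m^2/p$. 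The same argument applied to $-X$ gives $E(X^-)^2\le 2m^2/(1-p)$.

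\textbf{Step 2 (combine).} Adding the two bounds,
\[
1=EX^2\le 2m^2\Big(\frac1p+\frac1{1-p}\Big)=\frac{2m^2}{p(1-p)},
\]
so $m^2\ge p(1-p)/2$.

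\textbf{Step 3 (balance of the halves).} Here I need $p$ bounded away from $0$ and $1$. I would use Grünbaum's one-dimensional inequality: for a log-concave $X$ with $EX=0$, both $P(X>0)$ and $P(X<0)$ are at least $1/e$. It is proved by comparing $f_X$ with an exponential density that crosses it appropriately, and the exponential case is extremal. Hence $p(1-p)\ge e^{-1}(1-e^{-1})>0.23$, so $m>0.33$ and $E|X|=2m>0.66\ge 1/2$.

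\textbf{Main obstacle.} The two auxiliary facts are the real content: the IFR moment bound in Step 1 and Grünbaum's bound in Step 3. The key ingredient for Step 1 is the inequality $\int_t^\infty\bar F_Y\le \bar F_Y(t)EY$, i.e.\ that the mean residual life is decreasing, which follows from log-concavity of $\bar F_Y$. The slack is ample, so cruder versions of either fact would also suffice; for instance, any $p\in[0.15,0.85]$ already gives $m\ge 1/4$.
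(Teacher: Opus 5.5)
Your proof is correct. It shares the key ingredient with the paper: the inequality $EZ^2\le 2(EZ)^2$ for a positive r.v.\ $Z$ with a log-concave density. The paper simply cites this from Keilson; you prove it via the decreasing mean residual life. The difference is where you cut $X$.

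The paper splits $X-m$ at the median $m$, so both conditional pieces carry probability exactly $1/2$. The resulting quantities $E|X-m|$ and $E(X-m)^2$ are not the target ones. The paper handles this with the extremal properties of the median and the mean, $E|X|\ge E|X-m|$ and $E(X-m)^2\ge EX^2=1$, and then simply discards the cross term $8E(X-m)_+E(X-m)_-$.

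You split at the mean $0$ instead. This gives $EX^+=EX^-$ and $E(X^+)^2+E(X^-)^2=1$ exactly, but leaves the weights $p$ and $1-p$ uncontrolled. So you need a second nontrivial log-concavity fact, Gr\"unbaum's bound $\min(p,1-p)\ge 1/e$, which you cite rather than prove. The paper's median trick gets balanced halves for free and avoids this entirely, so its proof is more self-contained.

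In exchange, your route gives a noticeably stronger conclusion: $E|X|\ge\sqrt{2p(1-p)}\ge\sqrt{2e^{-1}(1-e^{-1})}\approx 0.68$ rather than $1/2$. Equivalently, $EX_+\ge 0.34$ instead of $EX_+\ge 1/4$. This could be fed into Lemma~\ref{lem:decr}: the optimized small-$\de$ bound there is $\frac89\mu^3\de$ with $\mu$ the lower bound on $EX_+$, which is $\de/72$ for $\mu=1/4$. So it would improve the numerical constants in Theorem~\ref{th:}, after rechecking the final case analysis.
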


\begin{proof} Take any $X\in L$. Let $m$ be the median of $X$, so that $P(X<m)\break =P(X>m)=1/2$. Let $Z^\pm$ be r.v.'s whose respective distributions are the conditional distributions of $\pm(X-m)$ given $\pm(X-m)>0$. Then $Z^\pm$ are positive r.v.'s with log-concave densities. 
So, by a well-known inequality (see e.g.\ \cite[formula (0.3)]{keilson72}), 
\begin{equation*}
2(EZ^\pm)^2\ge E(Z^\pm)^2. \tag{45}\label{45} 	
\end{equation*}
Since
$EZ^\pm=2E(X-m)_\pm$ and $E(Z^\pm)^2=2E(X-m)_\pm^2$, where $u_\pm:=\max(0,\pm u)$, we can rewrite \eqref{45} as $4(E(X-m)_\pm)^2\ge E(X-m)_\pm^2$. Therefore, and because (i) $m$ is a minimizer of $E|X-a|$ in real $a$ and (ii) $EX=0$ is a minimizer of $E(X-a)^2$ in real $a$, we get 
\begin{align*}
	4(E|X|)^2& \ge4(E|X-m|)^2 \\ 
	&=4(E(X-m)_+ +E(X-m)_-)^2 \\ 
	&\ge4(E(X-m)_+)^2+4(E(X-m)_-)^2 \\ 
	&\ge E(X-m)_+^2+E(X-m)_-^2 \\ 
	&=E(X-m)^2\ge EX^2=1. 
\end{align*}
So, $2E|X|\ge1$. 
\end{proof}

\begin{lemma}\label{lem:a,b}
For all real $\de>0$, $b\ge0$, and $x\ge0$, 
\begin{equation}\label{eq:a,b}
	\min(\de,x)\ge r_{\de,b}(x):=a(\de,b)\Big(\frac{x^2}2-b\frac{x^3}3\Big),
\end{equation}
where 
\begin{equation}\label{eq:a}
	a(\de,b):=\min\Big(\frac{16 b}{3},\,6 b^2\de\Big)
	=\left\{
	\begin{alignedat}{2}
 &\frac{16 b}{3} &&\text{\quad if \;} b\ge\frac{8}{9 \de }, \\
 &6 b^2 \de  &&\text{\quad if \;}  b\le \frac{8}{9 \de }.
\end{alignedat}
\right.
\end{equation}
Moreover, for each pair $(\de,b)\in(0,\infty)\times[0,\infty)$, $a(b,\de)$ is the best (that is, the largest) constant factor in \eqref{eq:a,b}.
\end{lemma}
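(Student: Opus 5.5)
The plan is to observe that $\min(\de,x)\ge y$ holds exactly when $\de\ge y$ and $x\ge y$. So, for a given constant $a\ge0$, inequality \eqref{eq:a,b} holds for all $x\ge0$ if and only if two separate conditions hold:
\begin{equation*}
a\,g_b(x)\le\de \quad\text{and}\quad a\,g_b(x)\le x\qquad\text{for all } x\ge0,
\end{equation*}
where $g_b(x):=\frac{x^2}2-b\frac{x^3}3$. Each condition is a one-variable maximization, and the largest admissible $a$ is the smaller of the two resulting thresholds. Proving the lemma then amounts to computing these two thresholds and checking that their minimum is $a(\de,b)$ as defined in \eqref{eq:a}. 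This gives both the inequality and its optimality at once.

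First I treat the degenerate case $b=0$. Then $a(\de,0)=0$, so \eqref{eq:a,b} is trivial. It is also optimal, because $g_0(x)=x^2/2$ is unbounded on $[0,\infty)$, so no $a>0$ can satisfy $a\,g_0(x)\le\de$ for all $x$.

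Now let $b>0$. For the first condition, $g_b'(x)=x(1-bx)$, so the maximum of $g_b$ on $[0,\infty)$ is attained at $x=1/b$ and equals $\frac1{6b^2}$. Hence $a\,g_b\le\de$ on $[0,\infty)$ if and only if $a\le 6b^2\de$. For the second condition, at $x=0$ both sides vanish. For $x>0$ the condition becomes $a\,h_b(x)\le1$ with $h_b(x):=g_b(x)/x=\frac x2-\frac{bx^2}3$. This concave quadratic is maximized at $x=\frac3{4b}$, with maximum value $\frac3{16b}$. So the second condition holds if and only if $a\le\frac{16b}3$. Where $g_b$ is negative (for $x>\frac3{2b}$), both conditions are automatic, so restricting to the maximizers loses nothing.

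Combining the two conditions, \eqref{eq:a,b} holds exactly when $a\le\min\big(\frac{16b}3,6b^2\de\big)=a(\de,b)$. This proves the inequality and shows that $a(\de,b)$ is the largest constant. Finally, I verify the case split in \eqref{eq:a}: for $b>0$, $\frac{16b}3\le 6b^2\de$ if and only if $b\ge\frac8{9\de}$. There is no real obstacle here. The only point needing care is the reduction of the $\min$ to the two separate conditions together with the exact attainment of the maxima, since that attainment is what gives optimality.
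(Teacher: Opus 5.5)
Your proof is correct and uses the same two maximizations as the paper: $\max_{x\ge0}\bigl(\frac{x^2}2-b\frac{x^3}3\bigr)=\frac1{6b^2}$, attained at $x=1/b$, and $\max_{x>0}\bigl(\frac x2-b\frac{x^2}3\bigr)=\frac3{16b}$, attained at $x=\frac3{4b}$. The only difference is packaging: you phrase each condition as an ``if and only if'' on $a$, so optimality follows at once, including the case $b=0$ where the paper's equality point $x=1/b$ is unavailable, whereas the paper exhibits equality cases and must check which term of the $\min$ is active there.
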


\begin{proof} 
First, let us verify \eqref{eq:a,b}. 

The case $b=0$ is obvious. So, without loss of generality (wlog) $b>0$. Then the maximum of $\frac{x^2}2-b\frac{x^3}3$ in $x\ge0$ is $\frac1{6b^2}$, so that for all $x\ge0$ we have 
\begin{equation*}
r_{\de,b}(x)\le a(\de,b)\frac1{6b^2}\le 6b^2\de \frac1{6b^2}=\de. 	
\end{equation*}
 
The case $x=0$ is obvious. So, wlog $x>0$. Then the maximum of $\frac1x\big(\frac{x^2}2-b\frac{x^3}3\big)$ in $x>0$ is $\frac3{16b}$, so that 
\begin{equation*}
	r_{\de,b}(x)\le a(\de,b)\frac3{16b}\,x\le \frac{16b}3\frac3{16b}\,x=x.
\end{equation*}

Thus, for all real $\de>0$, $b>0$, and $x>0$ we have $r_{\de,b}(x)\le\de$ and \break 
$r_{\de,b}(x)\le x$, so that \eqref{eq:a,b} follows. 

As for the optimality of the constant factor $a(\de,b)$, note that \eqref{eq:a,b} turns into an equality if (i) $b\le \frac{8}{9 \de }$ and $x=\frac1b$ or (ii) $b\ge \frac{8}{9 \de }$ and $x=\frac3{4b}$. 
\end{proof}

\begin{lemma}\label{lem:decr}
Take any real $\de>0$ and any $X\in L$ such that the p.d.f.\ $f$ of $X$ is nonincreasing on $[0,\infty)$. Then 
\begin{equation*}
	P(0<X<\de)\ge p_1(\de):=
	\left\{
	\begin{alignedat}{2}
 &\frac{\de }{72} &&\text{\quad if \;} 0<\de \le \frac{16}{3}, \\
 &\frac{32 (9 \de -32)}{243 \de ^2} &&\text{\quad if \;} \frac{16}{3}\le\de \le \frac{64}{9}, \\
 &\frac{1}{12} &&\text{\quad if \;} \de\ge\frac{64}{9}. 
\end{alignedat}
\right. 
\end{equation*} 
\end{lemma}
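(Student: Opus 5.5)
The plan is to represent $f$ on $(0,\infty)$ as a mixture of uniform densities and then integrate Lemma~\ref{lem:a,b} against the mixing measure. Since $f$ is nonincreasing on $[0,\infty)$ and integrable, we have $f(x)\to0$ as $x\to\infty$. Replacing $f$ by its right-continuous version, which changes nothing on a null set, we can write
\[
f(x)=\int_{(0,\infty)}1(x<y)\,\nu(dy)\qquad (x>0),
\]
where $\nu$ is the nonnegative measure given by $\nu((y,\infty))=f(y)$, that is, $\nu(dy)=-df(y)$. By Tonelli's theorem this gives, for any $\de>0$ and $k\ge1$,
\[
P(0<X<\de)=\int_0^\de f=\int\min(\de,y)\,\nu(dy),\qquad EX_+^k=\int\frac{y^{k+1}}{k+1}\,\nu(dy).
\]
In particular $\int\frac{y^2}2\,\nu(dy)=EX_+$ and $\int\frac{y^3}3\,\nu(dy)=EX_+^2$.

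Next, apply \eqref{eq:a,b} pointwise in $y$ and integrate against $\nu$. For every $b\ge0$ this yields
\[
P(0<X<\de)\ge a(\de,b)\big(EX_+-b\,EX_+^2\big).
\]
Since $EX=0$, we have $EX_+=\frac12E|X|\ge\frac14$ by Lemma~\ref{lem:E|X|>1/2}, and $EX_+^2\le EX^2=1$. Hence
\[
P(0<X<\de)\ge g(b):=a(\de,b)\Big(\frac14-b\Big)\qquad\text{for all } b\in[0,1/4].
\]

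It remains to maximize $g(b)$ (or just choose a good $b$) using the two branches of \eqref{eq:a}. There are three cases.
\begin{itemize}
\item If $\de\le\frac{16}3$, take $b=\frac16$. This choice is admissible for the branch $a=6b^2\de$, since $\frac16\le\frac8{9\de}$. It gives $6\cdot\frac1{36}\de\cdot\frac1{12}=\frac\de{72}$; note that $b=\frac16$ maximizes $b^2(\frac14-b)$.
\item If $\de\ge\frac{64}9$, take $b=\frac18$. This lies in the branch $a=\frac{16b}3$, since $\frac18\ge\frac8{9\de}$, and it maximizes $b(\frac14-b)$. It gives $\frac{16}{24}\cdot\frac18=\frac1{12}$.
\item If $\frac{16}3\le\de\le\frac{64}9$, take the switching point $b=\frac8{9\de}$, which lies in $[\frac18,\frac16]\subset[0,\frac14]$. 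It gives $\frac{128}{27\de}\cdot\frac{9\de-32}{36\de}=\frac{32(9\de-32)}{243\de^2}$.
\end{itemize}
These three values are exactly $p_1(\de)$.

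The main point needing care is the first step: justifying the mixture representation and the moment identities via Fubini/Tonelli. Issues include a possible jump of $f$ at $0$ and the requirement that $f$ vanish at infinity. Both are handled by monotonicity together with integrability; finiteness of $f(0+)$ follows from monotonicity, since $f$ is bounded on $(0,\infty)$ by $f(0)$. After that, everything is the elementary optimization above.
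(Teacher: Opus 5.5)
Your proposal is correct and matches the paper's proof: the paper uses the same Stieltjes-measure representation of $f$ as a mixture of uniforms (you take a right-continuous version on $(0,\infty)$, the paper a left-continuous one on $[0,\infty)$, which is immaterial), the same integration of Lemma~\ref{lem:a,b} combined with $EX_+\ge\frac14$ and $EX_+^2\le1$, and the same choices $b=\frac16,\ \frac8{9\de},\ \frac18$. Your restriction to $b\le\frac14$ is unnecessary but harmless.
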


\begin{proof} Wlog the p.d.f.\ $f$ is left continuous. 
Let $\nu$ be the (nonnegative) Lebesgue--Stieltjes measure over the interval $[0,\infty)$ defined by the formula 
\begin{equation*}
	\nu([x,\infty))=f(x)
\end{equation*}
for real $x\ge0$. Then 
\begin{equation}\label{eq:P}
\begin{aligned}
	P(0<X<\de)&=\int_0^\de dx\,f(x) \\ 
&=\int_0^\de dx\,\int_{[x,\infty)}\nu(dy) \\ 
&=\int_{[0,\infty)}\nu(dy) \int_0^{\min(\de,y)} dx\, \\ 
&=\int_{[0,\infty)}\nu(dy)\,\min(\de,y),
\end{aligned}	
\end{equation}

\begin{equation}\label{eq:EX_+}
\begin{aligned}
	EX_+&=\int_0^\infty dx\,xf(x) \\ 
&=\int_0^\infty dx\,x\int_{[x,\infty)}\nu(dy) \\ 
&=\int_{[0,\infty)}\nu(dy) \int_0^y dx\,x \\ 
&=\int_{[0,\infty)}\nu(dy)\,\frac{y^2}2,
\end{aligned}	
\end{equation}

\begin{equation}\label{eq:EX_-}
\begin{aligned}
	EX_+^2&=\int_0^\infty dx\,x^2f(x) \\ 
&=\int_0^\infty dx\,x^2\int_{[x,\infty)}\nu(dy) \\ 
&=\int_{[0,\infty)}\nu(dy) \int_0^y dx\,x^2 \\ 
&=\int_{[0,\infty)}\nu(dy)\,\frac{y^3}3. 
\end{aligned}	
\end{equation}

It follows from \eqref{eq:P}, \eqref{eq:EX_+}, \eqref{eq:EX_-}, and Lemma~\ref{lem:a,b} that 
\begin{equation}\label{eq:P ge}
	P(0<X<\de)\ge a(\de,b)(EX_+ - b\, EX_+^2). 
\end{equation}
By Lemma~\ref{lem:E|X|>1/2} and the condition $EX=0$, 
\begin{equation*}
	1/2\le E|X|=EX_+ + EX_-=2EX_+,
\end{equation*}
whence $EX_+\ge1/4$. Also, $EX_+^2\le EX^2=1$. So, in view of \eqref{eq:P ge}, for all real $\de\ge0$ and $b\ge0$,
\begin{equation}\label{eq:P gee}
	P(0<X<\de)\ge p_2(\de,b):=a(\de,b)\Big(\frac{1}{4}-b\Big)
	=\left\{
	\begin{alignedat}{2}
 &\frac{16}{3} b \Big(\frac{1}{4}-b\Big) && \text{\quad if } b\ge\frac{8}{9 \de } \\
 &6 \Big(\frac{1}{4}-b\Big) b^2 \delta  && \text{\quad  if } b\le\frac{8}{9 \de }.
\end{alignedat}
\right. 
\end{equation}
So, $p_2(\de,b)$ is piecewise polynomial in $b$, with the two polynomial pieces of degrees $2$ and $3$. Therefore, it is straightforward but somewhat tedious to maximize $p_2(\de,b)$ in $b\ge0$, to get 
\begin{equation*}
	\max_{b\ge0}p_2(\de,b)=p_1(\de). 
\end{equation*}
However, to complete the proof of Lemma~\ref{lem:decr}, it is enough to note that 
\begin{equation*}
	p_1(\de)=p_2(\de,b_\de),
\end{equation*}
where 
\begin{equation*}
	b_\de:=
\left\{
	\begin{alignedat}{2}
 &\frac16 && \text{\quad if } \de\le\frac{16}3 \\
 &\frac8{9\de}  && \text{\quad  if } \frac{16}3\le\de\le\frac{64}9, \\ 
 &\frac18  && \text{\quad  if } \de\ge\frac{64}9;  
\end{alignedat}
\right. 
\end{equation*}
concerning the piecewise expression of $p_2(\de,b)$ in \eqref{eq:P gee}, note also that $b_\de\le\frac8{9\de}$ if $\de\le\frac{16}3$, and $b_\de\ge\frac8{9\de}$ if $\de\ge\frac{64}9$. 
\end{proof}

By the $x\leftrightarrow-x$ reflection, from Lemma~\ref{lem:decr} we get 
\begin{corollary}\label{cor:incr}
Take any real $\de>0$ and any $X\in L$ such that the p.d.f.\ $f$ of $X$ is nondecreasing on $(-\infty,0]$. Then 
\begin{equation*}
	P(-\de<X<0)\ge p_1(\de).
\end{equation*}   
\end{corollary}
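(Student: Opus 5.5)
The plan is to deduce Corollary~\ref{cor:incr} directly from Lemma~\ref{lem:decr} by applying the lemma to the reflected random variable $Y:=-X$.

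\emph{Step 1: $Y$ belongs to $L$.} The density of $Y$ is $f_Y(y)=f(-y)$. This is log-concave, because $y\mapsto -y$ is affine and composing a concave function ($\log f$) with an affine map keeps it concave. We also have $EY=-EX=0$ and $EY^2=EX^2=1$. Hence $Y\in L$.

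\emph{Step 2: the monotonicity hypothesis transfers.} Take $0\le y_1\le y_2$. Then $-y_2\le -y_1\le 0$. Since $f$ is nondecreasing on $(-\infty,0]$, we get $f_Y(y_1)=f(-y_1)\ge f(-y_2)=f_Y(y_2)$. So $f_Y$ is nonincreasing on $[0,\infty)$.

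\emph{Step 3: apply the lemma.} Lemma~\ref{lem:decr}, applied to $Y$, gives $P(0<Y<\de)\ge p_1(\de)$. The events $\{0<-X<\de\}$ and $\{-\de<X<0\}$ coincide, so $P(-\de<X<0)\ge p_1(\de)$, which is the claim.

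There is no substantive obstacle. The only point needing care is the density convention at $0$: Lemma~\ref{lem:decr} takes $f$ left continuous, while the reflected density may be right continuous. This does not matter, because changing a density on a Lebesgue-null set changes neither the probabilities nor the moments involved. The lemma's proof already says the density may be modified in this way without loss of generality.
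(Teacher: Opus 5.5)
Your proof is correct and is exactly the paper's argument: the paper obtains Corollary~\ref{cor:incr} from Lemma~\ref{lem:decr} ``by the $x\leftrightarrow-x$ reflection,'' i.e.\ by applying the lemma to $-X$, which lies in $L$ and has a density nonincreasing on $[0,\infty)$. Your checks (log-concavity and moments of $-X$, transfer of monotonicity, and the harmless null-set modification of the density) just spell out what the paper leaves implicit.
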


We can now complete the proof of Theorem~\ref{th:}. 

\begin{proof}[Proof of Theorem~\ref{th:}]
Note that 
for all real $\de>0$  
\begin{equation*}
r_1(\de):=\frac{p_1(\de)}{p(\de)}
=
\left\{
	\begin{alignedat}{2}
& 1+c \de  &&\text{\quad if } \de\le\frac{16}{3}, \\
& \frac{256 (9\de-32) (1+c\de)}{27 \de^3} &&\text{\quad if } \frac{16}{3}\le\de\le\frac{64}{9}, \\ 
& 6 (c +1/\de) &&\text{\quad if } \de\ge\frac{64}{9}. 
\end{alignedat}
\right.
\end{equation*}
Clearly, $r_1(\de)\ge1$ if $\de\le\frac{16}{3}$ or $\de\ge\frac{64}{9}$. In the remaining case, when $\frac{16}{3}\le\de\le\frac{64}{9}$, the second derivative in $\de$ of the rational expression for  $r_1(\de)$ is  
\begin{equation*}
	\frac{512 \left(3 c \de ^2+(9-32 c) \de -64\right)}{9 \de ^5}\le0. 
\end{equation*}
So, the minimum of the rational expression for  $r_1(\de)$ in $\de$ in the interval $[\frac{16}{3},\frac{64}{9}]$ is attained at an endpoint of this interval. So, $r_1(\de)\ge1$ for all real $\de>0$. 

This proves Theorem~\ref{th:} in the case when the p.d.f.\ $f$ of $X$ is nonincreasing on $[0,\infty)$. 

It remains to consider the case when $f$ is not nonincreasing on $[0,\infty)$. Then $f$ is nondecreasing on $(-\infty,0]$. So, by Corollary~\ref{cor:incr}, 
\begin{equation*}
	f(0)=\lim_{\de\downarrow0}\frac{P(-\de<X<0)}\de\ge
	\lim_{\de\downarrow0}\frac{p_1(\de)}\de=\frac1{72}. 
\end{equation*}

Also, since $f$ is not nonincreasing on $[0,\infty)$, there is some real $u>0$ such that $f$ is nondecreasing on the interval $[0,u]$ and nonincreasing on the interval $[u,\infty)$. In particular, $f\ge f(0)\ge\frac1{72}$ on the interval $[0,u]$. 

So, if $u\ge\de$, then $f\ge f(0)\ge\frac1{72}$ on the interval $[0,\de]$ and hence
\begin{equation*}
	P(0<X<\de)=\int_0^\de dx\,f(x)\ge\frac\de{72}\ge p(\de).
\end{equation*}
So, wlog 
\begin{equation}\label{eq:u<de}
	0<u<\de. 
\end{equation}
Note that, to get \eqref{eq:P ge}, we did not use any conditions on the r.v.\ $X$ except that its p.d.f.\ $f_X$ be nonincreasing on $[0,\infty)$. Moreover, to get \eqref{eq:P gee} from \eqref{eq:P ge}, we only used the conditions $EX_+\ge1/4$ and $EX_+^2\le1$. So, 
for all real $b\ge0$, noting that the p.d.f.\ of the r.v.\ $X-u$ is nonincreasing on $[0,\infty)$, 
$E(X-u)_+\ge EX_+ -u\ge1/4-u$, and $E(X-u)_+^2\le EX_+^2\le1$, we similarly get 
\begin{align*}
	P(u<X<\de)&=P(0<X-u<\de-u) \\ 
	&\ge p_{2;u}(\de,b):=a(\de-u,b)(1/4-u - b) \\ 
	&=\left\{
	\begin{alignedat}{2}
	&q_1(b)&&\text{ if }b\ge b_u, \\ 
	&q_2(b)&&\text{ if }b\le b_u,  
	\end{alignedat}
	\right.
\end{align*}
where 
\begin{equation*}
	q_1(b):=\frac{16}3\,b\Big(\frac14-u-b\Big),\quad 
	q_2(b):=6\,b^2\Big(\frac14-u-b\Big)(\de-u), 
\end{equation*}
\begin{equation*}
	b_u:=\frac{8}{9(\de-u)}.  
\end{equation*}

It follows that 
\begin{align*}
	P(0<X<\de)&=P(0<X<u)+P(u<X<\de) \\ 
	&\ge \frac u{72}+p_{2;u}(\de,b). 
\end{align*}

Let us now maximize $p_{2;u}(\de,b)$ in $b\ge0$; the latter condition on $b$ will be henceforth assumed by default. In view of \eqref{eq:u<de}, $q_1(b)\le0$ and $q_2(b)\le0$ if $u\ge\frac14$, and $q_1(b)=q_2(b)=0$ if $b=0$. So, 
\begin{equation*}
	\max_{b\ge0}p_{2;u}(\de,b)=0\quad\text{if }u\ge\frac14. 
\end{equation*}

It remains to maximize $p_{2;u}(\de,b)$ in $b\ge0$ assuming that $0<u<\frac14$. Then 
\begin{equation*}
	\max_{b\ge0}q_1(b)=q_1(b_{1;u}),\quad\text{where }b_{1;u}:=\frac12\Big(\frac14-u\Big),
\end{equation*}
\begin{equation*}
	\max_{b\ge0}q_2(b)=q_2(b_{2;u}),\quad\text{where }b_{2;u}:=\frac23\Big(\frac14-u\Big), 
\end{equation*}
\begin{equation*}
	b_{1;u}\ge b_u\iff h\ge r_1(u):=\frac{64}9 \frac1{1-4u},
\end{equation*}
\begin{equation*}
	b_{2;u}\ge b_u\iff h\ge r_2(u):=\frac{16}3 \frac1{1-4u}, 
\end{equation*}
where 
\begin{equation*}
	h:=\de-u>0 
\end{equation*}
(and the latter two displayed logical equivalences hold if each of the entries of $\ge$ there is replaced by $\le$). 
Note also that $r_1\ge r_2$. 

Collecting the pieces, we see that 
\begin{align}
	&\max_{b\ge0}p_{2;u}(\de,b) \notag \\ 
	&=p_{1;u}(\de) \notag \\ 
	&:=\left\{
\begin{alignedat}{2}
& q_2(b_{2;u})=\frac{1}{72} (1-4 u)^3 h &&\text{ if }u<\frac14\ \&\ h\le r_2(u), \\ 
& q_1(b_u)=q_2(b_u)
=\frac{128}{27h}\Big(\frac{1}{4}-u-\frac{8}{9h}\Big)
	&&\text{ if }u<\frac14\ \&\ r_2(u)\le h\le r_1(u), \\ 
&q_1(b_{1;u})=\frac{1}{12} (1-4 u)^2 &&\text{ if }u<\frac14\ \&\ h\ge r_1(u), \\ 
&0 &&\text{ if }u\ge\frac14.
\end{alignedat}
	\right. \label{eq:p_2;u}
\end{align}

So, 
\begin{equation}\label{eq:ge}
	P(0<X<\de)\ge p_u(\de):=\frac u{72}+p_{1;u}(\de). 
\end{equation}
It remains to show that 
\begin{equation}\label{eq:p_u>p}
p_u(\de)\ge p(\de) 
\end{equation}
if $0<u<\de<\infty$. 

This follows immediately from the four lemmas below. 

\begin{lemma}\label{lem:1}
If $0<u<\frac14$, $0<h\le r_2(u)$, and $\de=u+h$, then (cf.\ \eqref{eq:ge}, the first of the four lines in \eqref{eq:p_2;u}, and \eqref{eq:})
\begin{equation*}
	d_1:=\frac u{72}+\frac{1}{72} (1-4 u)^3 h-\frac{\de}{72(1+c\de)}\ge0. 
\end{equation*}
\end{lemma}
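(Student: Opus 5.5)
The plan is to fix $u\in(0,\frac14)$ and study
\[
g(h):=72\,d_1=u+(1-4u)^3h-\frac{u+h}{1+c(u+h)},\qquad 0\le h\le r_2(u)=\frac{16}{3(1-4u)},
\]
as a function of the single variable $h$. The map $\de\mapsto\frac{\de}{1+c\de}$ is increasing and concave, so $g$ is convex in $h$. Its value at the left endpoint is $g(0)=u-\frac{u}{1+cu}\ge0$. Since $g$ is convex, its minimum over $[0,r_2(u)]$ is attained either at the unconstrained critical point $h_*$ or at the right endpoint $h=r_2(u)$, whichever applies.

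\textbf{Interior critical point.} Put $s:=(1-4u)^{3/2}\in(0,1)$. The equation $g'(h)=0$ reads $(1-4u)^3=(1+c\de)^{-2}$, that is,
\[
1+c\de_*=1/s,\qquad \de_*=\frac{1/s-1}{c},\qquad h_*=\de_*-u .
\]
Substituting, and using $\frac{\de_*}{1+c\de_*}=s\de_*$, gives
\[
g(h_*)=u(1-s^2)+\de_*(s^2-s)=u(1-s^2)-\frac{(1-s)^2}{c}.
\]
So in the case $0\le h_*\le r_2(u)$ it suffices to prove
\[
c\,u(1+s)\ge 1-s .
\]
For small $u$ we have $1-s\approx6u$ and $1+s\approx2$, so this reduces to $2c\ge6$, which holds since $c=4.19$. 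The inequality fails, however, as $u\uparrow\frac14$: there $s\to0$ and one would need $cu\ge1$. The remedy is to use the constraint $h_*\le r_2(u)$, i.e.
\[
\frac{(1-4u)^{-3/2}-1}{c}\le u+\frac{16}{3(1-4u)} .
\]
This forces $u$ to stay bounded away from $\frac14$ (for $u$ near $\frac14$ the left side blows up like $(1-4u)^{-3/2}$, faster than the right side). I would locate the threshold $u_0$ where equality holds in the last display, and then verify $cu(1+s)\ge1-s$ on $(0,u_0]$. In terms of $s$ this is an elementary one-variable inequality: $u=(1-s^{2/3})/4$. Checking it via monotonicity or a short derivative computation should be routine, though tight-ish; presumably $c=\frac{419}{100}$ was tuned exactly here or in the endpoint case.

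\textbf{Endpoint.} If instead $h_*>r_2(u)$ (this includes $u$ near $\frac14$), then $g$ is decreasing on $[0,r_2(u)]$ and the minimum is $g(r_2(u))$. With $h=\frac{16}{3(1-4u)}$ and $\de=u+h$, I would write
\[
g=u+\frac{16}{3}(1-4u)^2-\frac{\de}{1+c\de}
\]
and use the crude bound $\frac{\de}{1+c\de}<\frac1c$. Then it suffices that $u+\frac{16}{3}(1-4u)^2\ge\frac1c$ on the relevant $u$-range. The left side is a quadratic in $u$ with minimum value around $\frac14-\frac{3}{512}\cdot\ldots$, close to $0.24$, and this exceeds $\frac1{4.19}\approx0.2387$. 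If this crude bound turns out too lossy near the minimizing $u$, I would keep the exact term $\frac{\de}{1+c\de}$ and clear denominators to get a polynomial inequality in $u$ on $[0,\frac14]$.

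\textbf{Main obstacle.} I expect the main difficulty to be the bookkeeping of the two regimes: pinning down exactly where $h_*$ leaves $[0,r_2(u)]$, and checking that on each regime the resulting one-variable inequality in $u$ (equivalently in $s$) holds with the specific constant $c=\frac{419}{100}$. The margins there appear small, so the final verification may need to be done carefully, possibly by splitting the $u$-interval into a few subintervals.
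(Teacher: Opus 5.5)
Your argument is sound, and at heart it is the paper's: fix $u$, use convexity in $h$, and reduce to a one-variable inequality in $u$. The paper clears denominators to get the quadratic $D_1=(100+419\delta)\,g(h)=Ah^2+Bh+C$, with $A=419(1-4u)^3$ and $C=419u^2$. It then checks the discriminant for $u\le0.24$ and the vertex position for $u\ge0.24$. Your closed-form critical point instead gives the single condition $cu(1+s)\ge1-s$. This is exactly the criterion $B\ge-2\sqrt{AC}$ for $D_1\ge0$ on $[0,\infty)$, since $B+838us=100(1+s)\,[cu(1+s)-(1-s)]$.

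However, your claim that this condition fails as $u\uparrow\frac14$ is false. There one needs $cu\ge1$, and $c/4=1.0475>1$. Written as $s(1+cu)\ge1-cu$, the condition holds trivially for $u\ge1/c$, and in fact on all of $(0,\frac14)$. So the constraint $h\le r_2(u)$, the threshold $u_0$ and the endpoint case are superfluous; the paper likewise remarks that $h\le r_2(u)$ is never used. Your endpoint case is nevertheless correct, since $\min_u\bigl[u+\frac{16}{3}(1-4u)^2\bigr]=\frac{253}{1024}>\frac{100}{419}$.

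The real content is the range $0<u<1/c$, and it is much tighter than you suggest. The function $s(1+cu)-(1-cu)$ has minimum only about $1.7\times10^{-4}$, near $u\approx0.211$. This interior case, not the endpoint case, is where $c=\frac{419}{100}$ is tuned: with $c=4.18$ the inequality already fails there. So a quick derivative or monotonicity check will not be routine. Instead, square: for $u<1/c$ the condition is equivalent to $(1-4u)^3(100+419u)^2\ge(100-419u)^2$. One checks that $(100-419u)^2-(1-4u)^3(100+419u)^2=4u\,Q(u)$, where $Q(u)=2808976u^4-765932u^3-318917u^2+131400u-11900$ is the very quartic factor in the paper's discriminant. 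Made rigorous, your route therefore reduces to the paper's polynomial sign check $Q\le0$ on $(0,1/c)\subset(0,0.24]$. The trivial range $u\ge1/c$ then replaces the paper's vertex argument for $u\ge0.24$.
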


\begin{proof}[Proof of Lemma~\ref{lem:1}]
Note that 
\begin{align*}
	D_1(h)& := 72 (100 + 419 (h + u)) d_1 \\ 
	&=419 (1-4 u)^3 h^2 -2 u (13408 u^3-6856 u^2+114 u+181) h +419u^2
\end{align*}
is a strictly convex polynomial in $h$, whose discriminant 
\begin{equation*}
	16 u^2 \left(16 u^2-12 u+3\right) \left(2808976 u^4-765932 u^3-318917
   u^2+131400 u-11900\right)
\end{equation*}
is $\le0$ and hence $D_1(h)\ge0$ for all real $h$ if $u\le24/100$. On the other hand, if $u\ge24/100$, then the critical value of $h$ for this polynomial $D_1(h)$ is 
\begin{equation*}
	\frac{u \left(13408 u^3-6856 u^2+114 u+181\right)}{419 (1-4 u)^3}\le0,
\end{equation*}
so that $D_1(h)\ge D_1(0)=419 u^2\ge0$. So, $D_1(h)\ge0$ and hence $d_1\ge0$ for all $u\in(0,\frac14)$ and all real $h>0$. (The condition $h\le r_2(u)$ was not needed or used in this proof.)
\end{proof}

\begin{remark}\label{rem:}
The sign pattern of a polynomial in $\R[x]$ can be determined using Sturm's theorem -- see e.g.\ \cite[p.\ 244]{vanderWaerden}. Especially when the degree of the polynomial is rather small, one can also use calculus to determine the convexity and monotonicity patterns.  
In principle, all this can be done by hand. However, this is much more easy to do using any computer algebra system. 
\end{remark}

\begin{lemma}\label{lem:2}
If $0<u<\frac14$, $r_2(u)\le h\le r_1(u)$, and $\de=u+h$, then (cf.\ \eqref{eq:ge}, the second of the four lines in \eqref{eq:p_2;u}, and \eqref{eq:})
\begin{equation*}
	d_2:=\frac u{72}+\frac{128}{27h}\Big(\frac{1}{4}-u-\frac{8}{9h}\Big)-\frac{\de}{72(1+c\de)}\ge0. 
\end{equation*}
\end{lemma}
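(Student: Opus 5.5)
The plan is to separate the two sources of $u$-dependence in $d_2$: reduce the middle term to a function of a single scaled variable, bound it below by its value at the left endpoint, and then handle the subtracted term by a crude uniform bound. Concretely, I would put $s:=1-4u\in(0,1)$ and $h=k/s$. The hypothesis $r_2(u)\le h\le r_1(u)$ then becomes exactly $\frac{16}3\le k\le\frac{64}9$. A direct substitution gives
\begin{equation*}
\frac{128}{27h}\Big(\frac14-u-\frac{8}{9h}\Big)=\frac{128\,s}{27k}\Big(\frac s4-\frac{8s}{9k}\Big)=s^2 g(k),\qquad g(k):=\frac{32(9k-32)}{243k^2}.
\end{equation*}
This $g$ is the same rational function that appears in the middle line of $p_1$ in Lemma~\ref{lem:decr}.

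Next I would note that $g'(k)$ has the sign of $64-9k$, which is $\ge0$ on $[\frac{16}3,\frac{64}9]$. Hence $g(k)\ge g(\frac{16}3)=\frac2{27}$ throughout the range of $k$. This eliminates $h$ from the middle term and gives
\begin{equation*}
\frac u{72}+\frac{128}{27h}\Big(\frac{1}{4}-u-\frac{8}{9h}\Big)\ge \frac u{72}+\frac2{27}(1-4u)^2 .
\end{equation*}

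For the subtracted term, I would use that $\de\mapsto\frac{\de}{72(1+c\de)}$ is increasing with supremum $\frac1{72c}=\frac{100}{72\cdot419}$. This loses the dependence on $\de$ entirely, but that should be affordable here. It then suffices to show, for $0<u<\frac14$,
\begin{equation*}
\phi(u):=\frac u{72}+\frac2{27}(1-4u)^2-\frac{100}{72\cdot 419}\ge0 .
\end{equation*}
Since $\phi$ is a convex quadratic in $u$, this can be checked via its vertex or its discriminant. The minimum occurs where $1-4u=\frac{27}{72\cdot16}$, i.e.\ $u\approx0.2441$. There $\phi\approx0.003390+0.000041-0.003315>0$. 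Equivalently, after clearing denominators, the discriminant of $\phi$ is negative, which can be verified exactly in rational arithmetic as in Remark~\ref{rem:}.

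The only real obstacle is the margin in this last step. Near $u=\frac14$ the term $\frac u{72}\approx0.00347$ must beat $\frac1{72c}\approx0.00332$, and this is precisely where the choice $c=\frac{419}{100}$ enters: $c$ cannot be much smaller than $4$. If the crude bound $\frac1{72c}$ turned out too lossy, my fallback would be to keep $\de=u+k/s$ explicitly and clear denominators to get a polynomial inequality in $(u,k)$ on a rectangle. I would then treat that polynomial as in Lemma~\ref{lem:1}, checking convexity or monotonicity in $k$ and reducing to the endpoints $k=\frac{16}3$ and $k=\frac{64}9$. The estimate above indicates that this fallback is not needed.
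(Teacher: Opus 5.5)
Your proposal is correct, and it takes a genuinely different and much shorter route than the paper. I checked the substitution, the monotonicity of $g$, and the value $g(\frac{16}3)=\frac2{27}$. The minimum of your quadratic is $\phi_{\min}=\frac{253}{72\cdot1024}-\frac{100}{72\cdot419}\approx1.17\times10^{-4}>0$.

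The paper proceeds differently. It clears denominators to get a two-variable polynomial $D_2(u,h)$ that keeps the exact dependence on $\de$. It then locates interior critical points via degree-$7$ resultants and numerical root isolation. It checks the boundary curves $u=0$, $h=r_1(u)$ and $h=r_2(u)$, and it treats the unbounded end $u\uparrow\frac14$ by an asymptotic argument. All of this relies on computer algebra.

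Your substitution $h=k/(1-4u)$ instead exposes a scaling identity, $p_{1;u}(\de)=(1-4u)^2\,p_1\big((1-4u)(\de-u)\big)$ for $0<u<\frac14$. This identity in fact holds for all three pieces in \eqref{eq:p_2;u}. Because of it, the middle term separates as $(1-4u)^2g(k)$ on the fixed range $\frac{16}3\le k\le\frac{64}9$. One monotonicity fact and one convex quadratic in a single variable then finish the proof by hand.

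The price is the crude bound $\frac{\de}{72(1+c\de)}<\frac1{72c}$, which makes your argument work only for $c\ge\frac{1024}{253}\approx4.047$. The paper's exact treatment could in principle tolerate a smaller $c$ in this region. However, Lemma~\ref{lem:4} already forces $c\ge4$ by the same crude bound, and $c=\frac{419}{100}$ clears your threshold. So essentially nothing is lost, and your fallback is indeed unnecessary.
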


\begin{proof}[Proof of Lemma~\ref{lem:2}]
Note that 
\begin{align*}
	D_2& := 216 9 h^2 (100 + 419 (h + u))\, d_2 \\ 
	&=27 h^3 (419 u-100)+3771 h^2 \left(3 u^2-1024 u+256\right) \\ 
	&\quad -256 h
   \left(15084 u^2-171 u+12508\right)-8192 (419 u+100)  
\end{align*}
is a polynomial in $u,h$. Consider the partial derivatives of $D_2$ in $u$ and in $h$: 
\begin{align*}
	D_{21}&:=\partial_u D_2 \\ 
	&=11313 h^3+22626 h^2 u-3861504 h^2-7723008 h u+43776 h-3432448, \\ 
		D_{22}&:=\partial_h D_2 \\ 
	&=33939 h^2 u-8100 h^2+22626 h u^2-7723008 h u+1930752 h-3861504 u^2 \\ 
	&\phantom{33939 h^2 u-8100 h^2+22626 h u^2-7723008 h u\qquad}+43776 u-3202048.  	
\end{align*}
The resultants of the polynomials $D_{21}$ and $D_{22}$ in $u,h$ with respect to $u$ and $h$ are 
\begin{align*}
	R_1(h)&:=-383951907 h^7+360127950804 h^6-111582387392256 h^5 \\ 
	&+11376013014678528
   h^4+7495868032745472 h^3-5874950492651520 h^2 \\ 
   &+17672548909056
   h-3016115013812224,
\end{align*}
\begin{align*}
	R_2(u)&:=-1807585595653280832 u^7-847701988664287064715  
   u^6 \\ 
   &-78523365776753581860762 u^5+44892310928843299875696
   u^4 \\ 
   &+43252111127403174064608 u^3-35398357322310505259136
   u^2 \\ 
   &+29165745137518115033088 u-5192266514139579318272. 
\end{align*}

Note that $r_2(u)>\frac{16}3$, so that the condition $r_2(u)\le h\le r_1(u)$ in Lemma~\ref{lem:2} implies $h>\frac{16}3$.

There are three real roots of $R_1(h)$ that are $>\frac{16}3$: 
\begin{equation*}
	(h_1,h_2,h_3)\approx(255.934, 341.325, 341.342). 
\end{equation*}
There is only one root of $R_2(u)$ in the interval $(0,\frac14)$, $u_1\approx0.218271$. 
Therefore (see e.g.\ \cite[top of p.\ 104]{vanderWaerden}) $D_{21}=0=D_{22}$ only if $(u,h)=(u_1,h_j)$ for some $j=1,2,3$. 

On the other hand, for all $j=1,2,3$, 
\begin{equation*}
	D_2|_{u=u_1,\;h=h_j}>3\times10^9>0.
\end{equation*}
So, $D_2>0$ at all critical points point $(u,h)$ of $D_2$ such that $0<u<\frac14$ and $r_2(u)<h<r_1(u)$.

Also, $r_1(u)>r_2(u)\to\infty$ as $u\uparrow\frac14$ and hence 
\begin{equation*}
	\lim_{u\uparrow\frac14}\ \min_{h\in[r_2(u),r_1(u)]}\frac{D_2}{h^3}
	=27(419\times\tfrac14-100)>0, 
\end{equation*}
so that $D_3\to\infty>0$ uniformly in $h\in[r_2(u),r_1(u)]$ as $\uparrow\frac14$, which shows that $D_2>0$ for some $u_0\in(0,\frac14)$, all $u\in(u_0,\frac14)$, and all $h\in[r_2(u),r_1(u)]$.  

Therefore, to complete the proof of Lemma~\ref{lem:2}, it remains to show that $D_2\ge0$ on the boundary of the (unbounded) set 
\begin{equation*}
	G:=\{(u,h)\colon 0<u<\tfrac14,\ r_2(u)\le h\le r_1(u)\}. 
\end{equation*}
To do this, note first that $r_2(0)=\frac{16}{3}$, $r_1(0)=\frac{64}{9}$, and 
\begin{equation*}
	D_2|_{u=0}=4(-675 h^3+241344 h^2-800512 h-204800)>0
\end{equation*}
if $r_2(0)\le h\le r_1(0)$. 
It remains to note that 
\begin{align*}
	&D_2|_{h=r_1(u)} \\ 
	&=-\frac{4096}{27(1 - 4 u)^3} \left(1448064 u^4-725364 u^3-2565795 u^2+1302526
   u-159896\right)\ge0,
\end{align*}
\begin{align*}
	&D_2|_{h=r_2(u)} \\ 
	&=-\frac{256}{3(1 - 4 u)^3} \left(1287168 u^4-643092 u^3-1709051 u^2+875488
   u-107264\right)\ge0
\end{align*}
if $0<u<\tfrac14$. 
\end{proof}

\begin{lemma}\label{lem:3}
If $0<u<\frac14$, $h\ge r_1(u)$, and $\de=u+h$, then (cf.\ \eqref{eq:ge}, the third of the four lines in \eqref{eq:p_2;u}, and \eqref{eq:})
\begin{equation*}
	d_3:=\frac u{72}+\frac{1}{12} (1-4 u)^2-\frac{\de}{72(1+c\de)}\ge0. 
\end{equation*}
\end{lemma}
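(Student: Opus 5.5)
The plan is to discard the dependence on $h$ altogether: a uniform upper bound on the subtracted term, together with a one-variable minimization in $u$, should give $d_3>0$. The condition $h\ge r_1(u)$ will not be used, just as $h\le r_2(u)$ was not used in the proof of Lemma~\ref{lem:1}.

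First, bound the negative term. The map $\de\mapsto\frac{\de}{1+c\de}$ is increasing on $(0,\infty)$ and tends to $\frac1c$. Hence for every real $\de>0$,
\begin{equation*}
	\frac{\de}{72(1+c\de)}<\frac1{72c}=\frac1{72}\cdot\frac{100}{419}.
\end{equation*}
So it suffices to show that
\begin{equation*}
	g(u):=u+6(1-4u)^2\ge\frac{100}{419}\quad\text{for }0<u<\tfrac14,
\end{equation*}
because $72\,d_3> g(u)-\frac{100}{419}$.

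Next, minimize $g$. It is a convex quadratic with $g'(u)=1-48(1-4u)$, so its minimizer is given by $1-4u=\frac1{48}$, that is, $u_*=\frac{47}{192}\in(0,\frac14)$. The minimum value is
\begin{equation*}
	g(u_*)=\frac{47}{192}+\frac{6}{48^2}=\frac{47}{192}+\frac1{384}=\frac{95}{384}.
\end{equation*}

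Finally, compare constants: $\frac{95}{384}\approx0.2474$ and $\frac{100}{419}\approx0.2387$. The strict inequality $95\cdot419=39805>38400=100\cdot384$ gives $g(u)\ge\frac{95}{384}>\frac{100}{419}$, hence $d_3>0$. The only substantive step is noticing that the crude bound $\frac1{72c}$ already leaves enough room, so the region constraint is unnecessary. This margin is thin: $c=\frac{419}{100}$ is essentially forced by the case $u$ near $\frac14$, where $p_{1;u}(\de)$ nearly vanishes.
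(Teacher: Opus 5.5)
Your proof is correct and is essentially the paper's argument. The paper clears denominators to get $72(100+419\de)d_3=(40224u^2-19693u+2414)\de+100(96u^2-47u+6)$ and notes that both coefficients are positive for all real $u$; since $96u^2-47u+6=g(u)$, your inequality $g(u)>\frac{100}{419}$ is exactly positivity of the $\de$-coefficient $419g(u)-100$, which you check via the vertex rather than a discriminant, and neither argument uses $h\ge r_1(u)$.

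Your closing heuristic is off, though. This lemma needs only $c\ge\frac{384}{95}$ and Lemma~\ref{lem:4} only $c\ge4$; the value $c=\frac{419}{100}$ is actually pinned down by Lemma~\ref{lem:1}, near $u\approx0.21$.
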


\begin{proof}[Proof of Lemma~\ref{lem:3}]
Note that 
\begin{align*}
	D_3(\de)& := 72 (100 + 419 \de) d_3 \\ 
	&=(40224 u^2-19693 u+2414)\de+100 \left(96 u^2-47u+6\right)
\end{align*}
is a polynomial of degree $1$ in $\de$ with coefficients that are positive for all real $u$. 
So, for all $\de>0$ and all real $u$, we have $D_3(\de)\ge0$ and hence $d_3\ge0$. (The conditions $0<u<\frac14$ and $h\ge r_1(u)$ were not needed or used in this proof.)
\end{proof}

\begin{lemma}\label{lem:4}
If $u\ge\frac14$, then (cf.\ \eqref{eq:ge}, the fourth of the four lines in \eqref{eq:p_2;u}, and \eqref{eq:})
\begin{equation*}
	d_4:=\frac u{72}-\frac{\de}{72(1+c\de)}\ge0. 
\end{equation*}
\end{lemma}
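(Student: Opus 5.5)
The inequality $d_4\ge0$ only requires comparing $u$ with $\frac{\de}{1+c\de}$: after multiplying by $72$, the claim is that $u\ge\frac{\de}{1+c\de}$. The plan is to bound the right-hand side uniformly in $\de>0$ by a constant that is at most $\frac14$.

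First, note that the function $\de\mapsto\frac{\de}{1+c\de}$ is increasing on $(0,\infty)$ and tends to $\frac1c$ as $\de\to\infty$. Hence
\begin{equation*}
	\frac{\de}{1+c\de}=\frac1c-\frac{1}{c(1+c\de)}<\frac1c=\frac{100}{419}\quad\text{for all real }\de>0.
\end{equation*}
Second, $\frac{100}{419}<\frac14$, because $400<419$. Combining these facts with the hypothesis $u\ge\frac14$ gives
\begin{equation*}
	\frac{\de}{1+c\de}<\frac{100}{419}<\frac14\le u,
\end{equation*}
and therefore
\begin{equation*}
	d_4=\frac1{72}\Big(u-\frac{\de}{1+c\de}\Big)>0.
\end{equation*}
This holds for every $\de>0$, so the condition $u<\de$ from \eqref{eq:u<de} is not needed here.

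There is no real obstacle in this case. The only point to check is that the constant $c=\frac{419}{100}$ exceeds $4$, which is exactly what makes $\frac1c<\frac14$.
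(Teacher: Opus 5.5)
Your proof is correct and is essentially the paper's argument. The paper also bounds $\frac{\de}{1+c\de}$ by $\frac1c$ (by dropping the $1$ in the denominator) and then uses $u\ge\frac14$ together with $c=\frac{419}{100}>4$.
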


\begin{proof}[Proof of Lemma~\ref{lem:4}]
Note that, for $u\ge\frac14$ and $\de>0$,
\begin{align*}
	d_4\ge\frac{1/4}{72}-\frac{\de}{72(0+c\de)}
	=\frac{1}{72\times4}-\frac{1}{72c}\ge0. 
\end{align*}
Lemma~\ref{lem:4} is proved. 
\end{proof}

This completes the proof of Theorem~\ref{th:}. 
\end{proof}







\bibliographystyle{abbrv}


\bibliography
{C:/Users/ipinelis/Documents/pCloudSync/mtu_pCloud_02-02-17/bib_files/citations04-02-21}

\end{document}